\newcommand{\eChar}{\begin{enumerate}[(i)]}
\newcommand{\eCharR}{\begin{enumerate}[(a)]}
\newcommand{\eBr}{\begin{enumerate}[(1)]}
\title
{
Curvature and local matchings of conference graphs and extensions
}
\author[1]{Kaizhe Chen}
\author[2]{Shiping Liu}
\author[3]{Heng Zhang}
\affil[1,2,3]{School of Mathematical Sciences, University of Science and Technology of China, Hefei} 
\affil[1]{ckz22000259@mail.ustc.edu.cn}
\affil[2]{spliu@ustc.edu.cn}
\affil[3]{hengz@mail.ustc.edu.cn}
\date{}
\theoremstyle{plain}
\newtheorem{lemma}{Lemma}[section]
\newtheorem{theorem}[lemma]{Theorem}
\newtheorem{corollary}[lemma]{Corollary}
\theoremstyle{definition}
\newtheorem{conjecture}[lemma]{Conjecture}
\newtheorem{definition}[lemma]{Definition}
\newenvironment{manualtheorem}[1]{%
  \manualtheoreminner
}{\endmanualtheoreminner}
\newtheorem{example}[lemma]{Example}
\newtheorem{remark}[lemma]{Remark}
\numberwithin{equation}{section}
\begin{document}

\maketitle

\begin{abstract}
We prove a conjecture of Bonini et al. on the precise values of the Lin--Lu--Yau curvature of conference graphs, i.e., strongly regular graphs with parameters $(4\gamma+1,2\gamma,\gamma-1,\gamma)$.
Our method depends only on the parameter relations and applies to broader classes of amply regular graphs. In particular, we develop a new combinatorial approach to show the existence of local perfect matchings. A key observation is that counting common neighbors leads to useful quadratic polynomials. As a corollary, we derive an interesting number-theoretic result concerning quadratic residues.
\end{abstract}

\section{Introduction}
The study of discrete notions of Ricci curvature has attracted much attention and found important applications in graph theory, see, e.g., \cite{BJL12, BM15, CY96, JL14, LLY11, LY10, Ollivier09, Ollivier10}. Ricci curvature is a key concept in Riemannian geometry and geometric analysis. The introduction of analogous concepts of Ricci curvature on graphs brings new deep insights and tools to the study of graph theory.  In \cite{Ollivier09,Ollivier10}, Ollivier introduced the concept of coarse Ricci curvature for Markov chains on metric spaces, including graphs. In \cite{LLY11}, Lin, Lu, and Yau introduced a modified version of Ollivier's coarse Ricci curvature on graphs. This modified curvature, which we call the Lin--Lu--Yau curvature, has proved very useful in detecting geometric and analytic properties of the underlying graphs \cite{CKKLMP23,CKKLP20,CKKLMP20,M23, MS23, MW19, Salez}.

Explicit calculations of Lin--Lu--Yau curvature on graphs with local regularity restrictions have important applications in the study of locally correctable and some locally testable binary linear codes \cite{IS20}, and in bounding the diameter and eigenvalues of amply regular graphs \cite{HLX24}. A $d$-regular graph $G$ with $n$ vertices is called an {\it{amply regular graph}} with parameters $(n,d,\alpha,\beta)$ if any two adjacent vertices have $\alpha$ common neighbors and any two vertices at distance $2$ have $\beta$ common neighbors. Amply regular graphs with diameter $2$ are called strongly regular graphs. 
Denote by $\kappa(x,y)$ the Lin--Lu--Yau curvature of an edge $xy$. It is shown \cite{HLX24} that, for any edge $xy$ of an amply regular graph $G$ with parameters $(n,d,\alpha, \beta)$ with $\beta>\alpha\geq 1$, we have $\kappa(x,y)\geq \frac{3}{d}$. 

On the other hand, the Lin--Lu--Yau curvature has a natural upper bound. Achieving such an upper bound is equivalent to the existence of certain local perfect matchings. This is formulated precisely as below (see, e.g., \cite[Proposition 2.7]{CKKLMP20}).
\begin{theorem}\cite{CKKLMP20}\label{thm:upperk}
   Let $G=(V,E)$ be a $d$-regular graph. For any edge $xy\in E$, we have
   \[\kappa(x,y)\leq \frac{2+|\Delta_{xy}|}{d},\]
   where $\Delta_{xy}:=\Gamma(x)\cap \Gamma(y)$, $\Gamma(x):=\{z\in V| xz\in E\}$, and $\Gamma(y):=\{z\in V| yz\in E\}$.  Moreover, the following two propositions are equivalent:
   \begin{itemize}
   \item[(a)] $\kappa(x,y) = \frac{2+|\Delta_{xy}|}{d}$;
   \item[(b)] there is a perfect matching between  $N_x$ and $N_y$, where
     $N_x:= \Gamma(x) \backslash (\Delta_{xy} \cup \{y\}) $ and
     $N_y:= \Gamma(y) \backslash (\Delta_{xy} \cup \{x\}) $.
   \end{itemize}
\end{theorem}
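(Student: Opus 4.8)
The plan is to pass through Ollivier's curvature with idleness and recover the Lin--Lu--Yau curvature as a limit. For a laziness parameter $p\in[0,1)$ let $\mu_x^p$ denote the one-step lazy walk, $\mu_x^p(x)=p$ and $\mu_x^p(z)=\tfrac{1-p}{d}$ for $z\in\Gamma(x)$, put $\kappa_p(x,y)=1-W_1(\mu_x^p,\mu_y^p)$ with $W_1$ the $L^1$-Wasserstein distance for the graph metric $\rho$, and use $\kappa(x,y)=\lim_{p\to1^-}\tfrac{\kappa_p(x,y)}{1-p}$. Set $m=|\Delta_{xy}|$ and note $|N_x|=|N_y|=d-1-m$. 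The signed measure $\mu_x^p-\mu_y^p$ is supported on $\{x,y\}\cup N_x\cup N_y$ (it cancels on $\Delta_{xy}$): it equals $p-\tfrac{1-p}{d}$ at $x$ and its negative at $y$, $+\tfrac{1-p}{d}$ on each vertex of $N_x$, and $-\tfrac{1-p}{d}$ on each vertex of $N_y$. Everything thus reduces to estimating $W_1(\mu_x^p,\mu_y^p)$ through Kantorovich--Rubinstein duality: a $1$-Lipschitz potential bounds $W_1$ from below (hence $\kappa$ from above), while an explicit coupling bounds $W_1$ from above.

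For the inequality I would test duality against the potential $f(\cdot)=\rho(\cdot,y)$, which is $1$-Lipschitz. Since $x\sim y$ one has $f(x)=1$, $f(y)=0$, $f\equiv2$ on $N_x$ (because $\rho(u,y)=2$ there), and $f\equiv1$ on $N_y\cup\Delta_{xy}$. Pairing $f$ with $\mu_x^p-\mu_y^p$ gives $W_1\ge p+\tfrac{(1-p)(d-2-m)}{d}$, hence $\tfrac{\kappa_p(x,y)}{1-p}\le\tfrac{2+m}{d}$, and letting $p\to1^-$ yields $\kappa(x,y)\le\tfrac{2+m}{d}$.

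The implication (b)$\Rightarrow$(a) is an explicit coupling. For $p\ge\tfrac1{d+1}$ (so that the surplus $p-\tfrac{1-p}{d}$ at $x$ is nonnegative) I would leave the balanced mass on $\Delta_{xy}$ fixed, send the surplus $p-\tfrac{1-p}{d}$ from $x$ to $y$ along the edge $xy$, and move the mass $\tfrac{1-p}{d}$ at each $u\in N_x$ to its partner in $N_y$ along the matching edge. Every transported unit travels distance $1$, so the total cost equals the lower bound of the previous paragraph; thus $W_1$ attains it, $\kappa_p(x,y)=(1-p)\tfrac{2+m}{d}$ on $[\tfrac1{d+1},1)$, and $\kappa(x,y)=\tfrac{2+m}{d}$.

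The heart of the proof is (a)$\Rightarrow$(b), which I would establish in contrapositive form: absent a perfect matching I will sharpen the potential $\rho(\cdot,y)$ and deduce $\kappa(x,y)\le\tfrac{1+m}{d}<\tfrac{2+m}{d}$. Since the two sides have equal size, Hall's theorem furnishes $T\subseteq N_y$ with $|\Gamma(T)\cap N_x|<|T|$; put $R=\Gamma(T)\cap N_x$. I modify $f$ by lowering it to $0$ on $T$ and to $1$ on $R$, keeping $f(x)=1$, $f(y)=0$, $f\equiv1$ on $\Delta_{xy}$, $f\equiv2$ on $N_x\setminus R$, and $f\equiv1$ on $N_y\setminus T$. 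Pairing against $\mu_x^p-\mu_y^p$ then yields $W_1\ge p+\tfrac{(1-p)(d-2-m)}{d}+\tfrac{(1-p)(|T|-|R|)}{d}$, and as $|T|-|R|\ge1$ this strictly improves the earlier bound, forcing $\kappa(x,y)\le\tfrac{1+m}{d}$. The one genuinely non-routine point --- and the main obstacle --- is checking that this modified $f$ is $1$-Lipschitz on its support (after which the inf-convolution/McShane extension makes it $1$-Lipschitz on all of $V$). The only $f$-values that differ by the maximum $2$ are those on $N_x\setminus R$ (value $2$) versus those on $T\cup\{y\}$ (value $0$); the choice $R=\Gamma(T)\cap N_x$ guarantees that no vertex of $N_x\setminus R$ is adjacent to $T$, so all such pairs lie at distance $\ge2$, while the edges from $\Delta_{xy}$ or from $R$ into $T$ only drop $f$ by exactly $1$. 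With $1$-Lipschitzness confirmed, the strict gain above completes the argument.
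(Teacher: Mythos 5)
Your proposal is correct, but it takes a genuinely different route from the paper. The paper does not argue from first principles at all: it invokes the identity \eqref{def2}, $\kappa(x,y)=\frac{1}{d}\bigl(d+1-\min_{\phi}\sum_{v\in N_x}d(v,\phi(v))\bigr)$ with the minimum over bijections $\phi\colon N_x\to N_y$, which follows from the idleness-linearity result \eqref{BCLMP} of Bourne et al.\ \cite{BCLMP18} together with the structure of optimal plans established in \cite[Proposition 2.7]{CKKLMP20}; from \eqref{def2} both the bound and the equivalence (a)$\Leftrightarrow$(b) are immediate, since $\min_\phi\sum_v d(v,\phi(v))\geq |N_x|=d-1-|\Delta_{xy}|$ with equality precisely when some bijection matches each $v$ to a neighbor. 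You instead bypass \eqref{def2} entirely and work directly with $\kappa_p$: the easy direction of Kantorovich duality with the potential $\rho(\cdot,y)$ gives the upper bound, an explicit coupling along the matching gives (b)$\Rightarrow$(a), and --- the real substitute for the structural input hidden in \eqref{def2} --- you certify (a)$\Rightarrow$(b) in contrapositive by extracting a Hall violator $T\subseteq N_y$ (Theorem \ref{lemma:Hall}) and perturbing the potential on $T$ and $R=\Gamma(T)\cap N_x$; your Lipschitz verification is the correct crux, since the definition of $R$ is exactly what separates the value-$2$ set $N_x\setminus R$ from the value-$0$ set $T\cup\{y\}$ by distance at least $2$, and the McShane extension legitimizes the duality pairing. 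Your computations check out (the pairing yields $W_1\geq p+\frac{(1-p)(d-2-m)}{d}+\frac{(1-p)(|T|-|R|)}{d}$, hence $\kappa(x,y)\leq\frac{1+m}{d}$ without a perfect matching). What each approach buys: the paper's is a two-line deduction at the cost of importing \eqref{def2} as a black box; yours is self-contained, never needs the full characterization of optimal transport plans, and as a bonus delivers the quantitative deficit $\kappa(x,y)\leq\frac{1+|\Delta_{xy}|}{d}$ whenever the matching fails, which is strictly more than the stated equivalence.
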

The crucial role of local matching conditions in the curvature calculation was first observed in \cite{BM15, Smith}, and further developed in \cite{BCDDFP20,DOJ19,MW19}. For an amply regular graph  with parameters $(n,d,\alpha,\beta)$, the above upper bound reads $\kappa(x,y)\leq \frac{2+\alpha}{d}$. A natural question is for which parameters this upper bound is achieved. 

Bonini et al. \cite{BCDDFP20} derived formulas for the Lin--Lu--Yau curvature of strongly regular graphs in terms of the graph parameters and the size of a maximal matching in the core neighborhood. They further proposed the following conjecture \cite[Conjecture 1.7]{BCDDFP20}. 
\begin{conjecture}\cite{BCDDFP20}\label{conj:Bonini}
    Let $G=(V,E)$ be a strongly regular conference graph with parameters $(4\gamma+1, 2\gamma, \gamma-1,\gamma)$ with $\gamma\geq 2$. Then the Lin--Lu--Yau curvature satisfies 
    \[\kappa(x,y)=\frac{1}{2}+\frac{1}{2\gamma}, \,\,\text{for all}\,\,xy\in E.\]
\end{conjecture}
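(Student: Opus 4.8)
The plan is to deduce the conjecture from Theorem~\ref{thm:upperk}. For a conference graph we have $d=2\gamma$ and $|\Delta_{xy}|=\gamma-1$, so the upper bound there reads $\kappa(x,y)\le\frac{2+(\gamma-1)}{2\gamma}=\frac{\gamma+1}{2\gamma}=\frac12+\frac{1}{2\gamma}$, which is exactly the conjectured value. Thus it suffices to prove that equality holds, and by the equivalence (a)$\Leftrightarrow$(b) this amounts to exhibiting a perfect matching between $N_x$ and $N_y$. Counting inside $\Gamma(x)$ gives $|N_x|=2\gamma-(\gamma-1)-1=\gamma$, and likewise $|N_y|=\gamma$; moreover $N_x\cap N_y=\emptyset$, since a common element would lie in $\Delta_{xy}$. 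So I need a perfect matching in the bipartite graph $B$ on parts $N_x,N_y$ whose edges are the adjacencies of $G$.

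First I would record the local degree data. For $u\in N_x$ the pair $u,x$ is adjacent while $u,y$ is at distance $2$; counting their $\gamma-1$ and $\gamma$ common neighbours inside $\Gamma(x)$ and $\Gamma(y)$ respectively, and noting that $x$ is automatically a common neighbour of $u$ and $y$ (and $y$ is not a common neighbour of $u$ and $x$), yields that $u$ has exactly $\gamma-1-\delta(u)$ neighbours in $N_x$ and exactly $\gamma-1-\delta(u)$ neighbours in $N_y$, where $\delta(u):=|\Gamma(u)\cap\Delta_{xy}|$. In particular $\deg_B(u)=\gamma-1-\delta(u)$, with the symmetric statement for $v\in N_y$. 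Next I would reduce the matching problem, via König's theorem (equivalently Hall's condition), to a single extremal statement: $B$ has no perfect matching iff there exist $A\subseteq N_x$ and $B'\subseteq N_y$ with no $G$-edge between them and $|A|+|B'|\ge\gamma+1$. The heart of the proof is therefore the \emph{Key Claim} that any such $A,B'$ with no edge between them satisfy $|A|+|B'|\le\gamma$.

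To attack the Key Claim I would set $a=|A|$, $b=|B'|$ and first establish a concentration phenomenon. Since $b'\in B'$ is adjacent to none of $A$, all of its $N_x$-neighbours lie in $N_x\setminus A$; combined with $|\Gamma(b')\cap N_x|=\gamma-1-\delta(b')$ (the distance-$2$ count for $b',x$, with $y$ as the forced common neighbour) this forces $\delta(b')\ge a-1$, and symmetrically $\delta(a')\ge b-1$ for every $a'\in A$. Thus the cross-neighbours of $A$ and $B'$ are squeezed into the small set $\Delta_{xy}$ of size $\gamma-1$. The quantitative input I would then exploit is the strongly-regular identity $M^2+M=\gamma(I+J)$ for the adjacency matrix $M$, whose restricted eigenvalues solve the quadratic $\theta^2+\theta-\gamma=0$ with discriminant $4\gamma+1$. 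Applied to the indicator vectors of $A$, $B'$, $\{x\}$, $\{y\}$, the no-edge hypothesis together with $A\subseteq\Gamma(x)$, $B'\subseteq\Gamma(y)$, $A\cap\Gamma(y)=B'\cap\Gamma(x)=\emptyset$ turns each count of common neighbours into a quadratic polynomial in $a$ and $b$. Combining the count of edges into $\Delta_{xy}$ — bounded above by the per-vertex estimate $|\Gamma(w)\cap N_x|\le\gamma-2$ for $w\in\Delta_{xy}$ — with the concentration lower bounds $\sum_{a'}\delta(a')\ge a(b-1)$ and $\sum_{b'}\delta(b')\ge b(a-1)$, I expect the resulting polynomial inequality to collapse to the form $(a+b-\gamma)\cdot(\text{nonnegative})\le 0$.

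The step I expect to be the main obstacle is precisely this final collapse. The crude counting bounds are too lossy — they do not even rule out a single vertex of $N_x$ adjacent to all of $\Delta_{xy}$, which the claim must forbid — so the argument cannot rest on the strongly-regular axioms alone and must use the exact parameter relation $(4\gamma+1,2\gamma,\gamma-1,\gamma)$. This is where the quadratic polynomials become genuinely arithmetic: forcing the extremal configuration $a+b=\gamma+1$ should pin some integer common-neighbour count to a quadratic relation governed by the discriminant $4\gamma+1$, and it is the integrality/residue constraints on that relation, rather than any local inequality, that I expect to deliver the contradiction — the same mechanism that should produce the promised corollary on quadratic residues. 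Making this arithmetic step rigorous and uniform in $\gamma$ is the crux; once the Key Claim is secured, König/Hall yields the matching and Theorem~\ref{thm:upperk} yields $\kappa(x,y)=\frac12+\frac{1}{2\gamma}$.
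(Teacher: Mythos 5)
Your setup is sound and in fact matches the paper's framework: the reduction via Theorem \ref{thm:upperk} to a perfect matching between $N_x$ and $N_y$, the counts $|N_x|=|N_y|=\gamma$ and $\deg_B(u)=\gamma-1-\delta(u)$ (this is the paper's Lemma \ref{lemma:neighbors} specialized to $\beta=\alpha+1$), the K\"onig/Hall reformulation, and the concentration bounds $\delta(b')\ge a-1$, $\delta(a')\ge b-1$ are all correct. But there is a genuine gap exactly where you flag it: your Key Claim ($|A|+|B'|\le\gamma$ for cross-independent pairs) is never established. What you offer in its place --- that integrality or quadratic-residue constraints tied to the discriminant $4\gamma+1$ of $\theta^2+\theta-\gamma=0$ should pin an integer common-neighbour count and force a contradiction --- is speculation, and it points in the wrong direction: in the paper the quadratic-residue statement (Corollary \ref{cor:number_theoretic}) is a \emph{consequence} of the matching theorem applied to Paley graphs, not an ingredient of its proof. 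The proof itself, as the abstract stresses, uses only the parameter relations and must cover conference graphs that are not Paley graphs (where $4\gamma+1$ need not be a prime power), so no residue arithmetic is available to it.

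The paper closes the argument with a different, second-moment mechanism that your sketch is missing. After reducing via the symmetric Hall lemma (Lemma \ref{lemma:Hall_reduction}) to sets $S\subseteq N_x$ with $b=|S|\le\frac{d-\alpha}{2}=\frac{\gamma+1}{2}$ --- a reduction your K\"onig formulation does not provide (your extremal pairs can have both $|A|$ and $|B'|$ of order $\gamma/2$ or one side huge) and which is essential for the estimates to bite --- one assumes $|\Gamma_{N_y}(S)|\le b-1$ and sums, over all pairs in $S$, their common neighbours in each of $\Delta_{xy}$, $P_{xy}$, $T$, and $N_x$ (the quantities $M_1,\dots,M_4$). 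Since each pair has at most $\max\{\alpha,\beta\}=\gamma$ common neighbours, $M_1+M_2+M_3+M_4+\binom{b}{2}\le\binom{b}{2}\gamma$, while the Cauchy--Schwarz inequality bounds each $M_i$ from below by a quadratic in the edge count $X$ between $S$ and $T$; the hypothesis $|T|\le b-1$ enters through $M_3\ge\frac{1}{2}\bigl[\frac{X^2}{b-1}-X\bigr]$. The resulting quadratic inequality \eqref{dxs2} in $X$ has positive leading coefficient, so its discriminant must be nonnegative, which yields $b>\gamma-3$ and, combined with $b\le\frac{\gamma+1}{2}$, a contradiction for $\gamma>6$; the remaining cases $\gamma\in\{2,\dots,6\}$ are finished by hand. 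Notably, your ``vertex of $N_x$ adjacent to all of $\Delta_{xy}$'' scenario is exactly the paper's $b=1$ case, and it is killed by a parity argument ($2p=d-\alpha-1$ versus $2q=d-\alpha-2$ cannot both hold), not by residues. So the missing idea is the pair-counting/discriminant trick together with the small-set Hall reduction; without them, your proposal is a correct setup but not a proof.
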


Recall that a {\it conference graph} is a strongly regular graph with parameters $(4\gamma+1, 2\gamma, \gamma-1,\gamma)$. Conference graphs have very interesting properties: While being highly symmetric, they are quasirandom \cite{AS16}. The existence of conference graphs is known for each $\gamma$ such that $4\gamma+1$ is a prime power (i.e. Paley graphs). Let $n=4\gamma+1$ be a prime power and $GF(n)$ be the unique finite field of order $n$. Recall that a {\it Paley graph} $P(n)$ of order $n$ is a graph on $n$ vertices with two vertices adjacent iff their difference is a non-zero square in $GF(n)$. 
By Theorem \ref{thm:upperk}, the Lin--Lu--Yau curvature of a conference graph is no greater than $\frac{2+(\gamma-1)}{2\gamma}=\frac{1}{2}+\frac{1}{2\gamma}$. Therefore, Conjecture \ref{conj:Bonini} in fact claims that the upper bound is achieved for conference graphs. 

We stress that, in general, the Lin--Lu--Yau curvature of an amply regular graph is not necessarily uniquely determined by its parameters. 
For instance, the $4 \times 4$ Rook's graph and the Shrikhande graph are both strongly regular with parameters $(16,6,2,2)$. However, Bonini et al. \cite{BCDDFP20} computed their Lin--Lu--Yau curvatures to be $\frac{2}{3}$ and $\frac{1}{3}$, respectively. In particular, the $4 \times 4$ Rook's graph attains the upper bound of the Lin--Lu--Yau curvature, while the Shrikhande graph does not.

Huang, Liu, and Xia showed \cite[Proposition 3.6]{HLX24} that $\kappa(x,y)=\frac{2+\alpha}{d}$ for any amply regular graph with parameters $(n,d,\alpha,\beta)$ such that $d\leq 2\beta-\alpha-1$. However, the parameters of conference graphs, namely $(4\gamma+1, 2\gamma, \gamma-1, \gamma)$, do not satisfy this condition. 

A recent work by Bonini, Liu, Semmens, and Tran \cite{BCCST24} proves Conjecture \ref{conj:Bonini} for Paley graphs $P(n)$ of even power order $n=p^{2m}$. 
Very interesting extensions to generalized Paley graphs have also been carried out in \cite{BCCST24}.

In this paper, we provide several sufficient conditions for the curvature of an amply regular graph to reach the natural upper bound. As a corollary, we prove Conjecture \ref{conj:Bonini}.
\begin{theorem}\label{thm2}
    Let $G=(V,E)$ be an amply regular graph with parameters $(n,d,\alpha,\beta)$ such that $n<3d-2\alpha$. Suppose that one of the following conditions holds:
    \begin{itemize}
      \item[(1)] $\beta=\alpha+1$ and $d> \alpha+\sqrt{6\alpha+\frac{1}{4}}+\frac{3}{2}$;
      \item[(2)] $\beta\ge\alpha+2$ and $d \ge \frac{3}{2}\beta + \frac{1}{2}\sqrt{4\alpha^2-3\beta^2+28\beta}$;
    \item[(3)] $\beta \geq \frac{2\sqrt{3}}{3}\alpha + 7$.
    \end{itemize}
    Then the Lin--Lu--Yau curvature satisfies \[\kappa(x,y)=\frac{2+\alpha}{d}, \,\,\text{for all}\,\,xy\in E.\]
\end{theorem}
In particular, condition (1) of Theorem \ref{thm2} covers all conference graphs with $\gamma > 6$. The cases for $2 \leq \gamma \leq 6$ are verified separately in Section \ref{section3}.
\begin{theorem}\label{Bonini}
The Conjecture \ref{conj:Bonini} holds true.
\end{theorem}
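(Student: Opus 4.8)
The plan is to establish the stronger statement behind Conjecture~\ref{conj:Bonini}: for every edge $xy$ of a conference graph one produces the local perfect matching whose existence, by Theorem~\ref{thm:upperk}, is equivalent to $\kappa(x,y)=\frac{2+\alpha}{d}=\frac12+\frac1{2\gamma}$. Here $d=2\gamma$, $\alpha=\gamma-1$, and $|N_x|=|N_y|=d-\alpha-1=\gamma$, so I must produce a perfect matching in the bipartite graph $H$ on $N_x\cup N_y$ whose edges are the edges of $G$. A first reduction records the local degrees in $H$: if $u\in N_x$ then $d(u,y)=2$, so $u$ and $y$ share $\beta=\gamma$ common neighbors, namely $x$, the $a_u:=|\Gamma(u)\cap\Delta_{xy}|$ vertices of $\Delta_{xy}$ adjacent to $u$, and the neighbors of $u$ inside $N_y$; hence $\deg_H(u)=\gamma-1-a_u$, and symmetrically for $N_y$.

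Next I would invoke Hall's theorem. If the matching failed there would be a set $S\subseteq N_x$ with $|N_H(S)|<|S|$; putting $T:=N_y\setminus N_H(S)$ yields nonempty $S\subseteq N_x$, $T\subseteq N_y$ with no $G$-edge between them and $|S|+|T|\ge\gamma+1$. The absence of edges forces local arithmetic: the $\gamma-1-a_u$ neighbors of $u\in S$ in $N_y$ must avoid $T$, so $a_u\ge|T|-1$, and symmetrically every $v\in T$ has at least $|S|-1$ neighbors in $\Delta_{xy}$. Thus a would-be Hall violator pushes many edges into the small set $\Delta_{xy}$ (of size $\gamma-1$) and, through the partition $V=\{x,y\}\cup\Delta_{xy}\cup N_x\cup N_y\cup R$ with $|R|=\gamma$, into the second neighborhood $R$ of the edge $xy$.

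The counting that turns this into quadratic polynomials comes from the parameter relations $\alpha=\gamma-1$, $\beta=\gamma$. Writing $d_S(w):=|\Gamma(w)\cap S|$, the fact that any two distinct non-adjacent vertices have exactly $\gamma$ common neighbors gives the cross identity $\sum_w d_S(w)\,d_T(w)=\gamma|S||T|$, while counting common neighbors of pairs inside $S$ (which is $\alpha$ or $\beta$ according to adjacency, and $\beta-\alpha=1$) gives the within-side identity
\[\sum_w d_S(w)^2=\gamma|S|^2+\gamma|S|-2e_S,\]
where $e_S$ is the number of edges inside $S$, with the analogue for $T$. Together with the induced regularity on $R$ (each $z\in R$ is non-adjacent to both $x$ and $y$, hence has exactly $\gamma-|\Gamma(z)\cap\Delta_{xy}|$ neighbors in each of $N_x$ and $N_y$), these identities constrain how $S$ and $T$ can simultaneously be large and non-adjacent, and I would combine them into a single quadratic inequality in $|S|$ and $|T|$ that should be violated once $|S|+|T|\ge\gamma+1$.

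The main obstacle is that no single such inequality closes uniformly. A second-moment (quasirandomness) estimate handles the bulk, i.e.\ \emph{balanced} violators with $|S|,|T|$ both of order $\gamma$, but it degrades precisely when one side is small; the extreme case $|S|=1$ is nothing other than the assertion that no vertex of $N_x$ is adjacent to \emph{all} of $\Delta_{xy}$, which the moment bound cannot see. I therefore expect the real work to lie in the unbalanced regime, where the common-neighbor count of $u$ against $\Delta_{xy}$ and $R$ must be pinned down exactly; the resulting quadratic polynomial should admit an integral root only under an arithmetic condition, and this condition—when specialized to Paley graphs $P(n)$, where adjacency means being a quadratic residue—is what I expect to yield the advertised number-theoretic consequence. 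Closing this last case using only the parameter relations $(4\gamma+1,2\gamma,\gamma-1,\gamma)$, rather than the regime $d\le 2\beta-\alpha-1$ (which conference graphs violate), is the crux.
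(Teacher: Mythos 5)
Your framework coincides with the paper's: the same six-part decomposition of $V$, the same reduction via Theorem \ref{thm:upperk} and Hall's theorem, and the same key idea of second-moment counting of common neighbors (your two identities, $\sum_w d_S(w)d_T(w)=\gamma|S||T|$ and $\sum_w d_S(w)^2=\gamma|S|^2+\gamma|S|-2e_S$, are correct and are exactly what the paper exploits through the Cauchy--Schwarz bounds on $M_1,\dots,M_4$ inserted into \eqref{leb}). But the proposal stops precisely where the content lies, and you say so yourself: ``no single such inequality closes uniformly,'' the combined inequality ``should be violated,'' the unbalanced case ``should admit an integral root only under an arithmetic condition.'' The paper does close it, and not in the way you anticipate. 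Combining the four moment bounds yields the quadratic \eqref{dxs2} in $X$ (the number of $S$--$T$ edges); nonnegativity of its discriminant forces $b>\gamma-3$, which against the symmetric reduction $b\le\frac{\gamma+1}{2}$ (Lemma \ref{lemma:Hall_reduction}, equivalent to your $|S|+|T|\ge\gamma+1$ normalization) eliminates all $\gamma\ge 7$ --- this is condition (1) of Theorem \ref{thm2} --- leaving only $\gamma\in\{3,4,5,6\}$, which require bespoke combinatorial arguments (e.g.\ the edge-distribution claim $M_4\ge 1$ for $\gamma=5$, and the four-common-neighbors contradiction for $\gamma=3$). None of this finite case analysis appears in your plan.

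The sharper gap is the $|S|=1$ case, which you correctly identify as invisible to the moment bounds but then misdiagnose. No arithmetic condition on quadratic residues is needed: the paper's argument is an unconditional parity contradiction. If $v_1\in N_x$ has no neighbor in $N_y$, then (since $\beta=\alpha+1$ and $|P_{xy}|=d-\alpha-1$ here) $v_1$ is adjacent to \emph{all} of $\Delta_{xy}\cup P_{xy}$ and to nothing in $N_x\cup N_y$; counting its common neighbors with an arbitrary $v\in N_x$ gives $2p=d-\alpha-1=\gamma$, while counting against an arbitrary $u\in N_y$ gives $2q=d-\alpha-2=\gamma-1$, so $\gamma$ is simultaneously even and odd. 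This works for every $\gamma\ge 2$ using only the parameters --- as it must, since conference graphs need not be Paley graphs. Relatedly, your final paragraph inverts the paper's logic: Corollary \ref{cor:number_theoretic} on quadratic residues is \emph{deduced from} Theorem \ref{Bonini} by specializing to $P(n)$; it is an application of the matching, not an ingredient in establishing it.
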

    By Theorem \ref{thm:upperk}, proving Theorem \ref{thm2} reduces to establishing the following result in terms of local matchings, which is equivalent to Theorem \ref{thm2}.
\begin{manualtheorem}{\ref{thm2}'}
     Let $G=(V,E)$ be an amply regular graph with parameters $(n,d,\alpha,\beta)$. If one of the $3$ parameter conditions in Theorem \ref{thm2} holds, then there exists a perfect matching between $N_x$ and $N_y$ for every $xy\in E$.
\end{manualtheorem}


Besides conference graphs, our parameter conditions are also fulfilled by many other strongly regular graphs, as shown in the arXiv version of this paper (see \cite[Appendix]{CLZ24}). In fact, we provide five more refined conditions in \cite[Theorem 1.4]{CLZ24}, which generalize the three conditions in Theorem \ref{thm2}. For brevity, we do not include them here.

Our proof provides a new combinatorial method for showing the existence of local perfect matchings, which is also applicable to many cases of amply regular graphs beyond those in Theorem \ref{thm2} and \cite[Theorem 1.4]{CLZ24}. For example, the restriction $n<3d-2\alpha$ is not always necessary. We refer to Remark \ref{rmk4.2} and Example \ref{ex:324} for more detailed discussions. 

Applying Theorem \ref{Bonini} to Paley graphs yields an interesting number-theoretic result. Theorem \ref{Bonini} tells that there exists a perfect matching between $N_x$ and $N_y$ for any adjacent vertices $x,y$ of a Paley graph $P(n)$ with $n>5$. This leads to the following corollary.
\begin{corollary}\label{cor:number_theoretic}
Let $n>5$ be a prime power congruent to $1$ modulo $4$. Consider two elements $x, y \in GF(n)$ such that $x - y$ is a non-zero square in $GF(n)$. For any subset $S \subset GF(n) \setminus \{x, y\}$ with $|S| \geq \frac{3}{4}(n - 1)$, there exist $w, z \in S$ such that $x - w, w - z, z - y$ are non-zero squares and $x - z, y - w$ are not squares in $GF(n)$.
\end{corollary}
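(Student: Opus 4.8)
The plan is to reinterpret the statement entirely inside the Paley graph $P(n)$ and let Theorem \ref{Bonini} supply all the geometric content. Since $n>5$ is a prime power with $n\equiv 1\pmod 4$, the graph $P(n)$ is a conference graph with parameters $(4\gamma+1,2\gamma,\gamma-1,\gamma)$ where $\gamma=(n-1)/4\geq 2$; moreover $-1$ is a square in $GF(n)$, so the adjacency relation ``$a-b$ is a nonzero square'' is symmetric. The hypothesis that $x-y$ is a nonzero square says exactly that $xy\in E$, so by Theorem \ref{Bonini} together with the equivalence $(a)\Leftrightarrow(b)$ in Theorem \ref{thm:upperk}, there is a perfect matching $M$ between $N_x$ and $N_y$.

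First I would write out the dictionary between the matching data and the five quadratic-residue conditions. A vertex $w\in N_x=\Gamma(x)\setminus(\Delta_{xy}\cup\{y\})$ is precisely a $w\neq y$ with $x-w$ a nonzero square and $y-w$ a nonsquare; symmetrically $z\in N_y$ means $z\neq x$ with $z-y$ a nonzero square and $x-z$ a nonsquare. A matching edge $wz\in M\subseteq E$ means $w-z$ is a nonzero square. Hence any single edge of $M$ already produces a pair $(w,z)$ satisfying all five required conditions, and the only remaining task is to force some edge of $M$ to have both endpoints inside $S$.

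This is a pigeonhole count, and the threshold $|S|\geq\frac34(n-1)$ is calibrated exactly for it. In a conference graph $|N_x|=|N_y|=d-\alpha-1=\gamma$, so $M$ consists of $\gamma$ vertex-disjoint pairs lying in $N_x\cup N_y\subseteq GF(n)\setminus\{x,y\}$. The set of elements we are allowed to miss, $T:=(GF(n)\setminus\{x,y\})\setminus S$, has size at most $(n-2)-\frac34(n-1)=\gamma-1=\alpha$. Because $M$ is a matching, each element of $T$ is an endpoint of at most one edge of $M$, so at most $\gamma-1$ of the $\gamma$ edges of $M$ meet $T$; therefore at least one edge $wz\in M$ has both $w,z\in S$, and that edge delivers the desired $w,z$.

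There is no genuine obstacle once Theorem \ref{Bonini} is invoked; the work is purely bookkeeping, and I would only double-check two points for correctness. First, that $-1$ being a square in $GF(n)$ is what lets us pass freely between $x-w$ and $w-x$ (and between $z-y$ and $y-z$) when matching the conditions, so the symmetric adjacency of $P(n)$ is used implicitly. Second, that the complement bound $|T|\leq\gamma-1=\alpha$ is sharp, being exactly one less than the number $\gamma$ of matching edges, which is what guarantees a surviving edge; the constant $\frac34$ cannot be relaxed without breaking this count. The substance of the corollary lives entirely in the existence of the local perfect matching provided by Theorem \ref{Bonini}.
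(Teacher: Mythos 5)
Your proposal is correct and follows essentially the same route as the paper's proof: invoke Theorem \ref{Bonini} together with the equivalence in Theorem \ref{thm:upperk} to obtain a perfect matching between $N_x$ and $N_y$, then observe that the complement $\left(GF(n)\setminus\{x,y\}\right)\setminus S$ has size at most $\gamma-1=|N_x|-1$ and hence, covering at most one matching edge per vertex, cannot be a vertex cover of the matching. Your explicit dictionary between membership in $N_x$, $N_y$, $E$ and the five quadratic-residue conditions (including the role of $-1$ being a square) only spells out what the paper leaves implicit.
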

In particular, the above corollary applies to quadratic residues when $n$ is a Pythagorean prime (i.e., a prime congruent to $1$ modulo $4$) greater than $5$.

The rest of this paper is organized as follows.
In Section \ref{section2}, we recall definitions and properties about the Lin--Lu--Yau curvature, perfect matching, and strongly/amply regular graphs. In Section \ref{section3}, we prove Theorem \ref{thm2}, Theorem \ref{Bonini}, and Corollary \ref{cor:number_theoretic}, and present possible ways to generalize our conclusion.

   

   \section{Preliminaries}\label{section2}
   \subsection{Lin--Lu--Yau curvature and local matchings}\label{sec1}
   \begin{definition}[Wasserstein distance]
   
     Let $G=(V,E)$ be a locally finite graph, $\mu_1$ and $\mu_2$ be two probability measures on $G$. The Wasserstein distance $W_1(\mu_1, \mu_2)$ between $\mu_1$ and $\mu_2$ is defined as
     \[W_1(\mu_1,\mu_2)=\inf_{\pi}\sum_{y\in V}\sum_{x\in V}d(x,y)\pi(x,y),\]
     where $d(x,y)$ denotes the combinatorial distance between $x$ and $y$ in $G$, and the infimum is taken over all maps $\pi: V\times V\to [0,1]$ satisfying
     $$m_1(x)=\sum\limits_{y\in V}\pi(x,y) \text{ for any $x\in V$ and }m_2(y)=\sum\limits_{x\in V}\pi(x,y) \text{ for any $y\in V$}.$$
     Such a map is called a transport plan.
     \end{definition}
     We consider the following particular measure around a vertex $x\in V$:
     \[\mu_x^p(y)=\left\{
                    \begin{array}{ll}
                      p, & \hbox{if $y=x$;} \\
                      \frac{1-p}{\mathrm{deg}(x)}, & \hbox{if $yx\in E$;} \\
                      0, & \hbox{otherwise,}
                    \end{array}
                  \right.
     \]
     where $\mathrm{deg}(x)$ is the degree of the vertex $x$.

     \begin{definition}[$p$-Ollivier curvature \cite{Ollivier09} and Lin--Lu--Yau curvature \cite{LLY11}] Let $G=(V,E)$ be a locally finite graph. For any vertices $x,y\in V$, the $p$-Ollivier curvature $\kappa_p(x,y)$, $p\in [0,1]$, is defined as
       \[\kappa_p(x,y)=1-\frac{W_1(\mu_x^p,\mu_y^p)}{d(x,y)}.\]
       The Lin--Lu--Yau curvature $\kappa(x,y)$ is defined as
       \[\kappa(x,y)=\lim_{p\to 1}\frac{\kappa_p(x,y)}{1-p}.\]
       \end{definition}
       Notice that $\kappa_1(x,y)$ is always $0$. Hence, the Lin--Lu--Yau curvature $\kappa(x,y)$ is equal to the negative of the left derivative of the function $p\mapsto \kappa_p(x,y)$ at $p=1$.

Bourne et al. \cite{BCLMP18} proved the following relation between $p$-Ollivier curvature and Lin--Lu--Yau curvature for an edge $xy$ of a $d$-regular graph $G$:
\begin{equation}\label{BCLMP}
    \kappa(x,y)=\frac{d+1}{d}\kappa_{\frac{1}{d+1}}(x,y).
\end{equation}
By the property of the Monge problem, this relation leads in particular to the following expression of Lin--Lu--Yau curvature of a $d$-regular graph:
\begin{equation}\label{def2}
    \kappa(x,y)=\frac{1}{d}\left(d+1-\min_{\substack{\phi: N_x\to N_y \\\text{bijective}}}\sum_{v\in N_x}d(v,\phi(v))\right).
\end{equation}
Theorem \ref{thm:upperk} on the upper bound of Lin--Lu--Yau curvature and its relation to existence of a perfect matching between $N_x$ and $N_y$ follows then directly from \eqref{def2}. For more details, we refer to \cite[Proposition 2.7]{CKKLMP20}.
   
     Let us recall the definition of a \emph{perfect matching}.

       \begin{definition}\cite[Section 16.1]{BM08}
        Let $G=(V,E)$ be a locally finite simple connected graph. A set $M$ of pairwise non-adjacent edges is called a \emph{matching}. 
        Each vertex adjacent to an edge of $M$ is said to be \emph{covered} by $M$. 
        A matching $M$ is called a \emph{perfect matching} if it covers every vertex of the graph.
        \end{definition}
 Hall's marriage Theorem is a fundamental tool for proving Conjecture \ref{conj:Bonini}. 
   \begin{theorem}\cite[Theorem 16.4, Hall's Marriage Theorem]{BM08}\label{lemma:Hall}
   Let $H=(V,E)$ be a bipartite graph with the bipartition $V=V_1\sqcup V_2$. Then $H$ has a perfect matching  if and only if
   \[|V_1|=|V_2|\,\, \text{and}\,\, |\Gamma_{V_2}(S)|\geq |S| \,\,\text{for all}\,\,S\subseteq V_1,\]
   where $\Gamma_{V_2}(S):=\{v\in V_2 |\,\,\text{there exists }\, w\in S \,\text{such that}\,\, vw\in E\}$.
    \end{theorem}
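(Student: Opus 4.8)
The plan is to prove the two implications separately, with the forward direction being routine and the converse carried by induction on $|V_1|$. For necessity, suppose $M$ is a perfect matching. Since each edge of $M$ covers exactly one vertex of $V_1$ and one of $V_2$, and $M$ covers every vertex, the equality $|V_1|=|V_2|$ is immediate. For Hall's condition, fix $S\subseteq V_1$; the matching assigns to each $w\in S$ a distinct partner $M(w)$, each lying in $\Gamma_{V_2}(\{w\})\subseteq\Gamma_{V_2}(S)$, and distinctness of these partners gives $|\Gamma_{V_2}(S)|\geq|\{M(w):w\in S\}|=|S|$.

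For sufficiency I would first observe that when $|V_1|=|V_2|$, any matching covering all of $V_1$ is automatically perfect, so it suffices to produce a matching saturating $V_1$ under Hall's condition, and then argue by induction on $k:=|V_1|$. The base case $k=1$ is immediate, since Hall's condition applied to the singleton guarantees a neighbor. For the inductive step I would split into two cases according to whether Hall's condition is tight somewhere.

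\textbf{Case 1 (strict slack).} Suppose every proper nonempty $S\subsetneq V_1$ satisfies $|\Gamma_{V_2}(S)|\geq|S|+1$. Pick any edge $wv$ (it exists because $\{w\}$ has a neighbor) and delete both endpoints. For any $S\subseteq V_1\setminus\{w\}$ the deletion removes at most one vertex from its neighborhood, so $|\Gamma_{V_2\setminus\{v\}}(S)|\geq|\Gamma_{V_2}(S)|-1\geq|S|$; Hall's condition survives, and the induction hypothesis supplies a matching saturating the smaller side, which together with $wv$ saturates $V_1$. \textbf{Case 2 (a tight set exists).} Suppose some proper nonempty $S_0\subsetneq V_1$ has $|\Gamma_{V_2}(S_0)|=|S_0|$. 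Restricting to $S_0$ and its neighborhood $\Gamma_{V_2}(S_0)$ gives a smaller instance still satisfying Hall's condition, so by induction there is a matching $M_1$ saturating $S_0$. The crux is to verify Hall's condition for the complementary instance on $V_1\setminus S_0$ and $V_2\setminus\Gamma_{V_2}(S_0)$: for any $T\subseteq V_1\setminus S_0$ I would apply the hypothesis to $S_0\cup T$ and use the disjoint decomposition $\Gamma_{V_2}(S_0\cup T)=\Gamma_{V_2}(S_0)\sqcup\Gamma_{V_2\setminus\Gamma_{V_2}(S_0)}(T)$ together with $|\Gamma_{V_2}(S_0)|=|S_0|$ to peel off the contribution of $S_0$ and deduce $|\Gamma_{V_2\setminus\Gamma_{V_2}(S_0)}(T)|\geq|T|$. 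The induction hypothesis then yields a matching $M_2$ saturating $V_1\setminus S_0$ using only vertices outside $\Gamma_{V_2}(S_0)$, and $M_1\cup M_2$ saturates all of $V_1$.

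I expect the tight-set case to be the main obstacle, since it is where the two subproblems interact: one must ensure that the matching on the complement never reuses a vertex of $\Gamma_{V_2}(S_0)$, and the neighborhood bookkeeping leading to $|\Gamma_{V_2\setminus\Gamma_{V_2}(S_0)}(T)|\geq|T|$ is exactly what guarantees this disjointness. An alternative I would keep in reserve is the augmenting-path argument, equivalently a deduction from K\"onig's theorem or from max-flow--min-cut, which avoids the case split but requires setting up more machinery; for a self-contained proof the induction above is the most economical, and it is all that is needed downstream, where Hall's theorem is invoked only to certify a perfect matching between $N_x$ and $N_y$.
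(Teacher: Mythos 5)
Your proof is correct. Note, however, that the paper does not prove this statement at all: Hall's Marriage Theorem is invoked as a classical result, cited as \cite[Theorem 16.4]{BM08}, so there is no internal proof to compare yours against. What you have written is the standard Halmos--Vaughan induction: the necessity direction is routine, the reduction from ``perfect matching'' to ``matching saturating $V_1$'' under $|V_1|=|V_2|$ is valid, and both branches of the inductive step check out. In Case 1 every nonempty $S\subseteq V_1\setminus\{w\}$ is a proper nonempty subset of $V_1$, so the slack hypothesis applies and survives the deletion of one vertex of $V_2$; in Case 2 the disjoint decomposition $\Gamma_{V_2}(S_0\cup T)=\Gamma_{V_2}(S_0)\sqcup\Gamma_{V_2\setminus\Gamma_{V_2}(S_0)}(T)$ is exactly right, and peeling off $|\Gamma_{V_2}(S_0)|=|S_0|$ gives Hall's condition for the complementary instance, which in turn guarantees the disjointness of $M_1$ and $M_2$ that you correctly identify as the crux. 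For comparison, the cited source \cite{BM08} derives Hall's theorem from matching theory developed via augmenting paths (Berge's theorem and the K\H{o}nig--Egerv\'ary framework), which is the ``more machinery'' alternative you mention keeping in reserve; your induction is more elementary and self-contained, while the augmenting-path route buys algorithmic content and the deficiency version. Either suffices for the paper's purposes, since downstream (in combination with Lemma \ref{lemma:Hall_reduction}) only the existence statement for a perfect matching between $N_x$ and $N_y$ is used.
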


\subsection{Strongly/Amply regular graph }\label{sec2}
\begin{definition}[Strongly/Amply regular graph \cite{BCN89}] Let $G$ be a $d$-regular graph with $n$ vertices which is neither complete nor empty.

\noindent The graph $G$ is called a strongly regular graph with parameters $(n,d,\alpha,\beta)$ if:
        \begin{itemize}
           \item [(i)] Any two adjacent vertices have $\alpha$ common neighbors;
           \item [(ii)] Any two non-adjacent vertices have $\beta$ common neighbors.
        \end{itemize}
    The graph $G$ is called an amply regular graph with parameters $(n, d,$ $\alpha, \beta)$ if:
        \begin{itemize}
            \item [(i)] Any two adjacent vertices have $\alpha$ common neighbors;
            \item [(ii)] Any two vertices at distance two have $\beta$ common neighbors.
        \end{itemize}
\end{definition}

The following theorem shows a basic relationship between the parameters of an amply regular graph.

            \begin{theorem}\label{xs}
       Let $G=(V, E)$ be an amply regular graph with parameters $(n, d, \alpha, \beta)$. Then,
       \begin{equation}\label{eq:arg_ineq}
           d(d-\alpha-1)\le (n-d-1)\beta.
       \end{equation}
    Moreover, the equality holds if and only if $G$ is strongly regular.
   \end{theorem}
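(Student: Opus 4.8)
The plan is to prove the inequality by a double-counting argument, counting the edges running between the neighborhood $\Gamma(x)$ of a fixed vertex $x$ and the set $\Gamma_2(x)$ of vertices at distance exactly $2$ from $x$. First I would fix any vertex $x\in V$ and observe that every neighbor $y\in\Gamma(x)$ has exactly $\alpha$ of its $d$ neighbors inside $\Gamma(x)$ --- namely the $\alpha$ common neighbors of the adjacent pair $x,y$ --- while one further neighbor is $x$ itself. Hence $y$ has exactly $d-1-\alpha$ neighbors lying outside $\{x\}\cup\Gamma(x)$, and each such neighbor, being adjacent to $y\sim x$ but not to $x$, is at distance exactly $2$ from $x$. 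Summing over the $d$ vertices of $\Gamma(x)$ produces $d(d-1-\alpha)$ edges between $\Gamma(x)$ and $\Gamma_2(x)$.

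Next I would count the same edges from the other side. Any vertex $z\in\Gamma_2(x)$ is at distance $2$ from $x$, so by ample regularity $x$ and $z$ share exactly $\beta$ common neighbors, all of which lie in $\Gamma(x)$; thus $z$ sends exactly $\beta$ edges into $\Gamma(x)$. Summing over $\Gamma_2(x)$ yields $|\Gamma_2(x)|\,\beta$ such edges, and equating the two counts gives the identity
\[
d(d-1-\alpha)=|\Gamma_2(x)|\,\beta.
\]
Since $\Gamma_2(x)\subseteq V\setminus(\{x\}\cup\Gamma(x))$, we have $|\Gamma_2(x)|\le n-1-d$, and substituting this bound immediately yields $d(d-\alpha-1)\le(n-d-1)\beta$, which is \eqref{eq:arg_ineq}.

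For the equality case I would read off the chain of estimates: equality holds precisely when $|\Gamma_2(x)|=n-1-d$, i.e. when every vertex other than $x$ and its neighbors lies at distance exactly $2$ from $x$. Because the displayed identity forces $|\Gamma_2(x)|$ to take the common value $d(d-1-\alpha)/\beta$ for every base vertex $x$ --- here one uses $\beta\ge 1$, which is automatic since any two vertices at distance $2$ necessarily have at least one common neighbor --- equality for a single vertex is equivalent to equality for all of them, and this says exactly that $G$ has diameter at most $2$. In a graph of diameter $2$ every pair of non-adjacent vertices is at distance $2$, so the ample regularity condition upgrades to the statement that \emph{any} two non-adjacent vertices have $\beta$ common neighbors, which is the strongly regular condition. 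Conversely, a strongly regular graph has diameter $2$, whence $\Gamma_2(x)=V\setminus(\{x\}\cup\Gamma(x))$ for all $x$ and equality holds. This establishes that the equality case is equivalent to $G$ being strongly regular.

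The combinatorial core of the argument is essentially routine; the only point requiring genuine care is the equality analysis, where I must justify that the distance-$2$ count is independent of the base vertex and that the resulting diameter-$2$ condition, in the presence of ample regularity, is truly equivalent to strong regularity rather than merely implied by it.
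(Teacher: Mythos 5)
Your proof is correct, but it follows a genuinely different route from the paper's. The paper makes a single \emph{global} double count: it counts triples $(\{v_1,v_2\},u)$ with $u$ a common neighbor of $v_1,v_2$, obtaining the exact value $M=\binom{d}{2}n$ by summing over $u$, and the upper bound $M\le e\alpha+\left(\binom{n}{2}-e\right)\beta$ with $e=\frac{nd}{2}$ by summing over pairs, since a non-adjacent pair has either $\beta$ or $0$ common neighbors; simplifying gives \eqref{eq:arg_ineq}, and the equality case is dismissed as clear. You instead count \emph{locally}: fixing $x$ and double counting the edges between $\Gamma(x)$ and $\Gamma_2(x)$ yields the exact identity $d(d-\alpha-1)=|\Gamma_2(x)|\,\beta$, from which the inequality follows via $|\Gamma_2(x)|\le n-d-1$. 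Your version buys strictly more information: it shows $|\Gamma_2(x)|$ is independent of $x$ (a standard fact about amply regular graphs), and it makes the equality analysis transparent --- equality is literally the statement that every non-neighbor of every vertex lies at distance $2$, i.e.\ $\diam(G)\le 2$, which together with ample regularity is precisely strong regularity. The paper's argument is marginally shorter, but its ``it's clear'' for the equality case secretly requires the same diameter-$2$ observation that you spell out. One small caveat, which affects both proofs equally: your claim that $\beta\ge 1$ is automatic presupposes that some pair of vertices at distance $2$ exists. If none does, then (since $G$ is $d$-regular and not complete) $G$ is a disjoint union of cliques $K_{d+1}$, so $\alpha=d-1$, both sides of your identity are covered by $d(d-\alpha-1)=0$, and equality in \eqref{eq:arg_ineq} forces $\beta=0$, in which case $G$ is again strongly regular (with $\beta=0$); so the statement survives this degenerate case, but it deserves one explicit sentence rather than the blanket assertion $\beta\ge1$.
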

   \begin{proof}
       Let $M$ be the number of pairs of $(\{v_1, v_2\}, u)$, where $v_1, v_2, u\in V$ and $u$ is a common neighbour of $v_1$ and $v_2$. For any $u\in V$, there are $\binom{d}{2}$ pairs of such $\{v_1, v_2\}$. Thus, $M=\binom{d}{2}n$. For any two adjacent vertices $v_1$ and $v_2$, there are $\alpha$ such $u$. For any two non-adjacent vertices $v_1$ and $v_2$, there are at either $\beta$ or $0$ such $u$. Therefore,$$M\le e\alpha+\left(\binom{n}{2}-e\right)\beta,$$where $e=|E|=\frac{nd}{2}$, and the inequality \eqref{eq:arg_ineq} follows. It is clear that the equality holds if and only if $G$ is strongly regular.
   \end{proof}

  \section{Proof of Theorem \ref{thm2}, Theorem \ref{Bonini}, and Corollary \ref{cor:number_theoretic}}\label{section3}
In this section, we prove Theorem \ref{thm2}, Theorem \ref{Bonini}, and Corollary \ref{cor:number_theoretic}. First, we observe the following lemma which is useful in applying Hall's marriage Theorem  (Theorem \ref{lemma:Hall}). 

\begin{lemma}\label{lemma:Hall_reduction}
     Let $H=(V,E)$ be a bipartite graph with the bipartition $V=V_1\sqcup V_2$ such that $|V_1|=|V_2|$. Denote by $m:=|V_1|=|V_2|$. Suppose that 
     \begin{itemize}
         \item [(i)] $|\Gamma_{V_2}(S)|\geq |S|$
     for any $S\subseteq V_1$ with $|S|\leq \frac{m+1}{2}$;
     \item [(ii)] $|\Gamma_{V_1}(S)|\geq |S|$
     for any $S\subseteq V_2$ with $|S|\leq \frac{m+1}{2}$.
     \end{itemize}
     Then, we have $|\Gamma_{V_2}(S)|\geq |S|$ for any $S\subset V_1$ and  $|\Gamma_{V_1}(S)|\geq |S|$ for any $S\subset V_2$. 
\end{lemma}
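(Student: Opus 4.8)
The plan is to verify Hall's condition $|\Gamma_{V_2}(S)|\ge |S|$ for \emph{every} $S\subseteq V_1$ by a complementation argument that converts information about small subsets of one side into information about large subsets of the other. For $S\subseteq V_1$ with $|S|\le \frac{m+1}{2}$ the desired inequality is literally hypothesis (i), so the only work is in the large regime $|S|>\frac{m+1}{2}$; the analogous statement for subsets of $V_2$ will then follow by exchanging the roles of $V_1$ and $V_2$, using hypotheses (ii) and (i) in place of (i) and (ii).

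For a large set $S\subseteq V_1$ I would introduce its \emph{non-neighborhood} $T:=V_2\setminus\Gamma_{V_2}(S)$, the vertices of $V_2$ having no neighbor in $S$. By construction every neighbor of a vertex of $T$ lies outside $S$, so $\Gamma_{V_1}(T)\subseteq V_1\setminus S$ and hence $|\Gamma_{V_1}(T)|\le m-|S|$. Since $|T|=m-|\Gamma_{V_2}(S)|$, the target inequality $|\Gamma_{V_2}(S)|\ge|S|$ is equivalent to $|T|\le m-|S|$, which would be immediate from $|T|\le|\Gamma_{V_1}(T)|$, i.e.\ from Hall's condition applied to the set $T\subseteq V_2$.

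The hard part, and the one thing that must be secured before invoking hypothesis (ii) on $T$, is that $T$ itself is a \emph{small} set, namely $|T|\le\frac{m+1}{2}$, so that it falls within the range covered by the hypotheses. First I would bound $|\Gamma_{V_2}(S)|$ from below: choosing any subset $S_0\subseteq S$ with $|S_0|=\floor{\frac{m+1}{2}}$ (possible since $|S|>\frac{m+1}{2}$) and applying hypothesis (i) to $S_0$ gives $|\Gamma_{V_2}(S)|\ge|\Gamma_{V_2}(S_0)|\ge\floor{\frac{m+1}{2}}$. A short floor computation, splitting into $m$ even and $m$ odd, then shows $|T|=m-|\Gamma_{V_2}(S)|\le\frac{m+1}{2}$ in both parities. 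This threshold bookkeeping, where the cutoff $\frac{m+1}{2}$ governing $S$ must close up with the cutoff governing $T$ for either parity of $m$, is the real obstacle; everything else is formal.

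With $|T|\le\frac{m+1}{2}$ in hand, hypothesis (ii) yields $|\Gamma_{V_1}(T)|\ge|T|$, and combining this with $|\Gamma_{V_1}(T)|\le m-|S|$ gives $|T|\le m-|S|$, hence $|\Gamma_{V_2}(S)|\ge|S|$, completing the large case. Running the identical argument with $V_1$ and $V_2$ interchanged establishes $|\Gamma_{V_1}(S)|\ge|S|$ for all $S\subseteq V_2$, which finishes the proof.
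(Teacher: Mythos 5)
Your proposal is correct and follows essentially the same route as the paper's proof: both handle $|S|>\frac{m+1}{2}$ by picking a subset of size $\left\lfloor \frac{m+1}{2}\right\rfloor$ to lower-bound $|\Gamma_{V_2}(S)|$ via hypothesis (i), then apply hypothesis (ii) to the non-neighborhood $T=V_2\setminus\Gamma_{V_2}(S)$ (which is small by the same floor computation) together with the disjointness $\Gamma_{V_1}(T)\cap S=\emptyset$, and conclude by symmetry. The only difference is presentational: you phrase the final step as $|T|\le|\Gamma_{V_1}(T)|\le m-|S|$, while the paper chains it as $m\ge|\Gamma_{V_1}(T)|+|S|\ge|T|+|S|$, which is the same inequality.
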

\begin{proof}
   Let $S\subseteq V_1$ be a subset of $V_1$ such that $|S|>\frac{m+1}{2}$. Pick a subset $S_1\subset S$ with $|S_1|=\left\lfloor \frac{m+1}{2}\right\rfloor$. It follows from the assumption (i) that 
   \[|\Gamma_{V_2}(S)|\geq |\Gamma_{V_2}(S_1)|\geq |S_1|=\left\lfloor \frac{m+1}{2}\right\rfloor.\]
This implies that
\[|V_2\setminus \Gamma_{V_2}(S)|=|V_2|-|\Gamma_{V_2}(S)|\leq m-\left\lfloor \frac{m+1}{2}\right\rfloor\leq \frac{m}{2}.\]
Due to the assumption (ii), we have
\[|\Gamma_{V_1}(V_2\setminus\Gamma_{V_2}(S))|\geq |V_2\setminus\Gamma_{V_2}(S)|.\]
By definition, we have $\Gamma_{V_1}(V_2\setminus\Gamma_{V_2}(S))\cap S=\emptyset$. Thus, we estimate
\[m=|V_1|\geq |\Gamma_{V_1}(V_2\setminus\Gamma_{V_2}(S))|+|S|\geq |V_2\setminus \Gamma_{V_2}(S)|+|S|=m-|\Gamma_{V_2}(S)|+|S|.\]
Hence we have $|\Gamma_{V_2}(S)|\geq |S|$. This proves 
$|\Gamma_{V_2}(S)|\geq |S|$ for any $S\subseteq V_1$. 
By symmetry, we have $|\Gamma_{V_1}(S)|\geq |S|$ for any $S\subseteq V_2$. 
\end{proof}

The following lemma is a direct consequence of Theorem \ref{xs}.

\begin{lemma}\label{lemm:size}
    Let $G=(V,E)$ be an amply regular graphs with parameters $(n,d,\alpha,\beta)$ such that $d\geq \alpha+2$. If $n<3d-2\alpha$, then $d\leq 2\beta$.
\end{lemma}
\begin{proof}
    If $n<3d-2\alpha$, we derive from Theorem \ref{xs} that 
    \begin{equation}\label{eq:arg}
        d(d-\alpha-1)\le (n-d-1)\beta\le2(d-\alpha-1)\beta.
    \end{equation}
    Since $d\geq \alpha+2$, we deduce $d\le 2\beta$. 
\end{proof}


Let $x,y$ be two adjacent vertices of an amply regular graph $G=(V,E)$ with parameters $(n,d,\alpha,\beta)$. 
We decompose the vertex set $V$ into $6$ disjoint subsets as follows (depicted in Figure \ref{fig:1}):
    $$V=\{x\}\cup\{y\}\cup\Delta_{xy}\cup N_x\cup N_y\cup P_{xy}.$$  
    
\begin{figure}[ht]
    \begin{center}
    \begin{tikzpicture}[scale=0.5] 
        \node (x) at (2,4) {$x$};   
        \node (y) at (8,4) {$y$};

        \draw (2,0) ellipse (1 and 0.5) node {$N_y$}; 
        \draw (8,0) ellipse (1 and 0.5) node {$N_x$}; 
        \draw (5,0) ellipse (1 and 0.5) node {$\Delta_{xy}$};

        \draw (5,-2) ellipse (0.7 and 0.7) node {$P_{xy}$};

        \draw (x) -- (1.5, 0.4);  
        \draw (x) -- (4.5, 0.4);
        \draw (y) -- (5.5, 0.4);
        \draw (y) -- (8.5, 0.4);

        \draw (x) -- (y);  
    \end{tikzpicture}
    \caption{A schematic plot for the decomposition of $V$}\label{fig:1}
\end{center}
\end{figure}
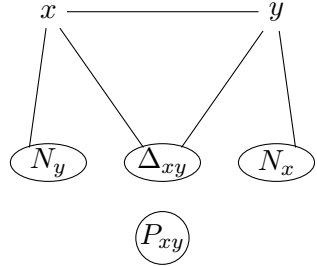
The subsets above are defined by $\Delta_{xy}:=\Gamma(x)\cap \Gamma(y)$,
$N_x:= \Gamma(x) \backslash (\Delta_{xy} \cup \{y\})$,
$N_y:= \Gamma(y) \backslash (\Delta_{xy} \cup \{x\})$, and
$P_{xy}:=V\backslash(\Gamma(x)\cup\Gamma(y))$, where $\Gamma(x):=\{z\in V|\, xz\in E\}$ and $\Gamma(y):=\{z\in V| \,yz\in E\}$. 
It is direct to check by definition that
    \begin{equation}\label{eq:counting}
        |\Delta_{xy}|=\alpha,\, |N_x|=|N_y|=d-\alpha-1,\, |P_{xy}|=n-2d+\alpha.
    \end{equation}
Next, we prepare the following lemma.
\begin{lemma}\label{lemma:neighbors}
    Let $G=(V,E)$ be an amply regular graphs with parameters $(n,d,\alpha,\beta)$ and $x,y$ be two adjacent vertices. Suppose that a vertex $v\in N_x$ has $\ell$ neighbors in $N_y$. Then,
    \begin{itemize}
        \item [(i)] $v$ has $\beta-1-\ell$ neighbors in $\Delta_{xy}$;
        \item [(ii)]  $v$ has $\alpha-\beta+1+\ell$ neighbors in $N_x$;
         \item [(ii)]  $v$ has $d-\alpha-1-\ell$ neighbors in $P_{xy}$.
    \end{itemize}
\end{lemma}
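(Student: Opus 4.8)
The plan is to prove all three assertions simultaneously by counting the common neighbors of $v$ with $x$ and with $y$, together with the degree constraint on $v$. The key preliminary observation is the position of $v$: since $v\in N_x$, we have $v\sim x$ but $v\not\sim y$ and $v\neq y$. Because $x\sim y$, this forces $d(v,y)=2$. Hence $v$ and $x$ are adjacent (so they share exactly $\alpha$ common neighbors), while $v$ and $y$ are at distance $2$ (so they share exactly $\beta$ common neighbors). I would introduce notation for the unknown counts: let $a$, $\ell$, $b$, $c$ denote the number of neighbors of $v$ lying in $\Delta_{xy}$, $N_y$, $N_x$, $P_{xy}$ respectively (with $\ell$ the given quantity), noting that $v$ has exactly one neighbor in $\{x\}\cup\{y\}$, namely $x$.

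The first main step is to locate the $\alpha$ common neighbors of $v$ and $x$. Any common neighbor lies in $\Gamma(x)=\Delta_{xy}\cup N_x\cup\{y\}$; since $v\not\sim y$, the vertex $y$ does \emph{not} count, so the common neighbors are precisely the neighbors of $v$ in $\Delta_{xy}\cup N_x$, giving $a+b=\alpha$. The second main step is to locate the $\beta$ common neighbors of $v$ and $y$. These lie in $\Gamma(y)=\Delta_{xy}\cup N_y\cup\{x\}$; crucially $x$ \emph{does} count, since $x\sim v$ and $x\sim y$. Thus the common neighbors are $x$ together with the neighbors of $v$ in $\Delta_{xy}\cup N_y$, yielding $a+\ell+1=\beta$. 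This last identity gives $a=\beta-1-\ell$, which is exactly part (i).

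Combining the two relations proves the remaining parts with no further work. Substituting $a=\beta-1-\ell$ into $a+b=\alpha$ gives $b=\alpha-\beta+1+\ell$, which is part (ii). Finally, using the total degree $1+a+\ell+b+c=d$ (the $1$ accounting for the edge to $x$) together with $a+b=\alpha$ yields $c=d-1-\alpha-\ell$, which is part (iii).

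I do not expect any genuine obstacle here; the content is purely a bookkeeping argument using the amply regular parameters. The only point requiring care — and the one place an incautious count would go wrong — is the asymmetric treatment of the vertices $x$ and $y$ as potential common neighbors: $y$ must be excluded from the common neighbors of $v$ and $x$ (because $v\not\sim y$), whereas $x$ must be included among the common neighbors of $v$ and $y$ (because $v\sim x\sim y$). Getting these two boundary cases right is precisely what produces the $-1$ in part (i) and the $+1$ in part (ii).
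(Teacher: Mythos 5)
Your proof is correct and follows essentially the same route as the paper's: counting the common neighbors of $v$ with $x$ (exactly $\alpha$, located in $\Delta_{xy}\cup N_x$) and with $y$ (exactly $\beta$, located in $\Delta_{xy}\cup N_y\cup\{x\}$, since $d(v,y)=2$), then using the degree of $v$ for part (iii). You are in fact slightly more explicit than the paper about the two boundary cases --- that $x$ counts as a common neighbor of $v$ and $y$ while $y$ does not count as one of $v$ and $x$ --- which is exactly where the $\pm 1$ terms come from.
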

\begin{proof}
 Since $y$ and $v$ have $\beta$ common  neighbors, $v$ has $\beta-1-\ell$ neighbors in $\Delta_{xy}$. Since $x$ and $v$ have $\alpha$ common neighbors, $v$ has $\alpha-\beta+1+\ell$ neighbors in $N_x$. Since $v$  has $d-1$ neighbors except $x$, the number of neighbors of $v$ in $P_{xy}$ is $d-\alpha-1-\ell$.   
\end{proof}

We now proceed to the proof of Theorem \ref{thm2}.
\begin{proof}[Proof of Theorem \ref{thm2}]
  Let $G=(V,E)$ be an amply regular graph satisfying one of the conditions (1)--(3), and let $xy\in E$ be an edge. Recall from \cite[Proposition 3.6]{HLX24} that $\kappa(x,y)=\frac{2+\alpha}{d}$ holds whenever $d\leq 2\beta-\alpha-1$. Thus, we assume throughout the proof that $d\ge 2\beta-\alpha$. 

  We claim that each of the conditions (1)--(3) yields $d>\alpha+2$. Indeed, it is clear that condition (1)  yields $d>\alpha+2$. Under the condition (2), $d\ge\frac{3}{2}\beta>\alpha+2$. Since $d\ge \beta$, condition (3) implies $d>\alpha+2$.
  Therefore, the claim holds. Recalling  \eqref{eq:counting}, we derive $|N_x|=|N_y|=d-\alpha-1>1$. That is, both $N_x$ and $N_y$ contain at least two vertices.

  Recall from \cite[Theorem 1.5]{LL21} that $\kappa(x,y)=\frac{2+\alpha}{d}$ holds whenever $\alpha=0$ and $\beta\geq 2$. If $\alpha=0$, then the conditions (2) and (3) both lead to $\beta\geq 2$ and hence $\kappa(x,y)=\frac{2}{d}$. Under the condition (1), $\alpha=0$ implies $\beta=1$ and $d>2$. However, by the condition $n<3d-2\alpha$ and Lemma \ref{lemm:size}, we have $d\leq 2\beta=2$, which is a contradiction. That is, under the condition (1), the case $\alpha=0$ is not allowed. 
  In the following proof, we assume that $\alpha>0$.

  By Theorem \ref{thm:upperk}, we only need to prove that there is a perfect matching between $N_x$ and $N_y$. By Hall's marriage Theorem (see Theorem \ref{lemma:Hall}), it remains to prove that, for any subset $S\subseteq N_x$, the number $|\Gamma_{N_y}(S)|$ of neighbors in $N_y$ is at least $|S|$. For simplicity, we denote $S:=\{v_1,v_2,\ldots,v_b\}$ with $b=|S|$ and $T:=\Gamma_{N_y}(S)$. We need to show the inequality
  \begin{equation}\label{eq:goal}
      |T|\geq b
  \end{equation}
  always holds.
For each vertex $v_i \in S$, let $x_i$ be the number of neighbors of $v_i$ in $N_y$. 
By Lemma \ref{lemma:neighbors}, the vertex $v_i$ has $d-\alpha-1-x_i$ neighbors in $P_{xy}$.
If $|P_{xy}|=0$, then we have $x_i=d-\alpha-1=|N_y|$. Therefore, the inequality \eqref{eq:goal} clearly holds. In the following proof, we assume that $|P_{xy}|>0$, i.e., $n-2d+\alpha>0$.

By symmetry and Lemma \ref{lemma:Hall_reduction}, we only need to show the inequality \eqref{eq:goal} for $b\leq \frac{d-\alpha}{2}$.

Assume, for the sake of contradiction, that $|T|\le b-1$. 

    \underline{Case 1:}  $b=1$. 

    If $b=1$, then $v_1 $ has no neighbors in $N_y$, that is, $x_1=0$. Thus, by Lemma \ref{lemma:neighbors}, $v_1$ has $\beta-1$ neighbors in $\Delta_{xy}$ and $d-\alpha-1$ neighbors in $P_{xy}$. If condition (2) or (3) holds, then $\beta-1>\alpha= |\Delta_{xy}|$, which is a contradiction. 
    If condition (1) holds,
    we have $\beta-1= \alpha$ and hence $v_1$ is adjacent to all vertices in $\Delta_{xy}$. Since $n<3d-2\alpha$, we have $|P_{xy}|=n-2d+\alpha\le d-\alpha-1$. Thus, $|P_{xy}|=d-\alpha-1$ and $v_1$ is adjacent to all vertices in $P_{xy}$. Therefore, $v_1$ has in total $d-1$ neighbors in $\Delta_{xy}\cup P_{xy}$. Hence, $v_1$ has no neighbors in $N_x\cup N_y$.

     We pick any vertex  $v \in N_x\setminus\{v_1\}$, and suppose that $v$ has $p$ neighbors in $N_y$. Then $v$ has $\beta-p-1$ neighbors in $\Delta_{xy}$ and $d-\alpha-1-p$ neighbors in $P_{xy}$. Therefore, $v$ and $v_1$ have a total of $(\beta-1-p)+(d-\alpha-1-p)+1$ common neighbors. Since $v$ and $v_1$ are of distance $2$, the number of common neighbors of $v$ and $v_1$ equals to $\beta$. Thus,
    \begin{align*}
      (\beta-1-p)+(d-\alpha-1-p)+1=\beta,
    \end{align*}
    that is,
     \begin{align}\label{2p}
      2p=d-\alpha-1,
    \end{align}
    which implies that $d-\alpha-1$ is even.

    Similarly, we pick any vertex  $u \in N_y$, and suppose that $u$ has $q$ neighbors in $N_x$. Then $u$ has $\beta-q-1$ neighbors in $\Delta_{xy}$ and $d-\alpha-1-q$ neighbors in $P_{xy}$. Therefore, $u$ and $v_1$ have a total of $(\beta-q-1)+(d-\alpha-1-q)$ common neighbors. Since $u$ and $v_1$ are non-adjacent, we have $q<|N_x|=d-\alpha-1$. Thus, $u$ and $v_1$ have at least one common neighbor, and hence the number of common neighbors of $u$ and $v_1$ equals to $\beta$. Then,
    \begin{align*}
      (\beta-q-1)+(d-\alpha-1-q)=\beta,
    \end{align*}
  that is,\begin{align}\label{2q}
    2q=d-\alpha-2.
  \end{align}
  Thus, $d-\alpha-1$ is odd, contradicting \eqref{2p}.

  \underline{Case 2}: $2\le b\le\frac{d-\alpha}{2}$.

 Let $X=x_1+\cdots+x_b$, then $X$ is the number of edges between $S$ and $T$.
 
     Set $\Delta_{xy}:=\{\delta_1,\ldots,\delta_{\alpha}\}$, and let $A_k$ be the number of neighbors of $\delta_k$ in $S$, where $1\le k \le \alpha$. Counting the number of edges between $S$ and $\Delta_{xy}$, we have 
     $$\sum_{k=1}^{\alpha} A_k =\sum_{i=1}^{b} (\beta-x_i-1) =(\beta-1)b-X.$$
    Let $X_{ij}$ be the number of common neighbors of $v_i$ and $v_j$ in $\Delta_{xy}$, and denote by
    \[M_1:=\sum_{1 \le i<j \le b}X_{ij}.\] 
    Note that, for any $\delta_k \in \Delta_{xy}$, there are $\binom{A_k}{2}$ pairs of $v_i,v_j$ so that $\delta_k$ is a common neighbor of them.
    We have
    $$M_1=\sum_{k=1}^{\alpha} \binom{A_k}{2}=\frac{1}{2}\left[\sum_{k=1}^{\alpha} A_k^2-\sum_{k=1}^{\alpha} A_k\right].$$
    By the Cauchy inequality, we derive
    \begin{align}\label{M1}
      M_1 &\ge \frac{1}{2}\left[\frac{1}{\alpha}\left(\sum_{k=1}^{\alpha} A_k\right)^2-\sum_{k=1}^{\alpha} A_k\right]  \notag \\
       &=\frac{1}{2}\left[\frac{\left((\beta-1)b-X\right)^2}{\alpha}-\left((\beta-1)b-X\right)\right].
    \end{align}

    Set $P_{xy}:=\{\epsilon_1,\ldots,\epsilon_{n-2d+\alpha}\}$, and let $B_k$ be the number of neighbors of $\epsilon_k$ in $S$, where $1\le k \le n-2d+\alpha$. Similarly, we have 
 $$\sum_{k=1}^{n-2d+\alpha} B_k =\sum_{i=1}^{b} (d-\alpha-1-x_i) =(d-\alpha-1) b-X.$$
Let $Y_{ij}$ be the number of common neighbors of $v_i$ and $v_j$ in $P_{xy}$, and denote by \[M_2:=\sum_{1 \le i<j \le b}Y_{ij}.\] Since $n-2d+\alpha\le d-\alpha-1$ under each condition $(1)-(5)$, it follows by Cauchy inequality that
\begin{align}
  \notag  M_2=\sum_{k=1}^{n-2d+\alpha} \binom{B_k}{2}&\ge \frac{1}{2}\left[\frac{1}{n-2d+\alpha}\left(\sum_{k=1}^{n-2d+\alpha} B_k\right)^2-\sum_{k=1}^{n-2d+\alpha} B_k\right]
      \\ \label{m2} &=\frac{1}{2}\left[\frac{\left( (d-\alpha-1) b-X\right)^2}{n-2d+\alpha}-\left( (d-\alpha-1) b-X\right)\right]\\ \label{M2}  &\ge  \frac{1}{2}\left[\frac{\left( (d-\alpha-1) b-X\right)^2}{d-\alpha-1}-\left( (d-\alpha-1) b-X\right)\right].
    \end{align}

     Set $T:=\{u_1,\ldots,u_{|T|}\}$, and let $C_k$ be the number of neighbors of $u_k$ in $S$, where $1\le k \le |T|$. Similarly, we have 
    $$\sum_{k=1}^{|T|} C_k =X.$$
   Let $Z_{ij}$ be the number of common neighbors of $v_i$ and $v_j$ in $T$, and denote by \[M_3:=\sum_{1 \le i<j \le b}Z_{ij}.\] Similarly, we have
       $$M_3=\sum_{k=1}^{|T|} \binom{C_k}{2}=\frac{1}{2}\left[\sum_{k=1}^{|T|} C_k^2-\sum_{k=1}^{|T|} C_k\right].$$
   By the assumption that $|T|\le b-1$, we derive
       \begin{align}\label{M3}
         M_3 \ge \frac{1}{2}\left[\frac{1}{|T|}\left(\sum_{k=1}^{|T|} C_k\right)^2-\sum_{k=1}^{|T|} C_k\right]
          \ge \frac{1}{2}\left[\frac{1}{b-1}X^2-X\right].
       \end{align}

Set $N_x=\{v_1,\ldots,v_{d-\alpha-1}\}$, and let $D_k$ be the number of neighbors of $v_k$ in $S$, where $1\le k \le d-\alpha-1$. 
Counting the number of ordered pairs $(v_k,v_j)$ with $v_k\in N_x$, $v_j\in S$ and $v_kv_j\in E$, we have
       $$\sum_{k=1}^{d-\alpha-1} D_k=\sum_{i=1}^b (\alpha-\beta+1+x_i) =(\alpha-\beta+1)b+X.$$
      Let $W_{ij}$ be the number of common neighbors of $v_i$ and $v_j$ in $N_x$, and denote by \[M_4:=\sum_{1 \le i<j \le b}W_{ij}.\] Similarly, we have
      \begin{align}\label{M4}
      \notag  M_4=\sum_{k=1}^{d-\alpha-1} \binom{D_k}{2}&\ge \frac{1}{2} \left[\frac{1}{d-\alpha-1}\left(\sum_{k=1}^{d-\alpha-1} D_k\right)^2-\sum_{k=1}^{d-\alpha-1} D_k\right]\\
             &=\frac{1}{2}\left[\frac{((\alpha-\beta+1)b+X)^2}{d-\alpha-1}-((\alpha-\beta+1)b+X)\right].
          \end{align}
 Since $v_i$ and $v_j$ have either $\alpha$ or $\beta$ common neighbors, we have \begin{align}\label{leb}\notag
 M_1+M_2+M_3+M_4+\binom{b}{2}&=\sum_{1 \le i<j \le b}(X_{ij}+Y_{ij}+Z_{ij}+W_{ij}+1)\\ &\le \binom{b}{2}\max\{ \alpha,\beta\}.\end{align}
We divide the remaining argument of Case 2 into two subcases.

(i) Suppose that the condition (1) holds. Inserting \eqref{M1}, \eqref{M2}, \eqref{M3}, \eqref{M4} into \eqref{leb} yields
\begin{align}\label{dxs2}
  \left(\frac{1}{\alpha}+\frac{2}{d-\alpha-1}+\frac{1}{b-1}\right)X^2-4bX+(b^2-b)(d-\alpha-1)\le0.
\end{align}
Since the coefficient of $X^2$ is positive, the discriminant of the above quadratic polynomial has to be non-negative.  Therefore, we derive
\begin{align}\label{4b}
  \notag 4b\ge&\left(\frac{1}{\alpha}+\frac{2}{d-\alpha-1}+\frac{1}{b-1}\right)(b-1)(d-\alpha-1)\\
=&2(b-1)+(d-\alpha-1)+\frac{(b-1)(d-\alpha-1)}{\alpha}.
\end{align}That is,
$$d-\alpha-3-\frac{d-\alpha-1}{\alpha}\le \left(2-\frac{d-\alpha-1}{\alpha}\right)b.$$
Since $d\le2\beta\le 3\alpha+1$, it follows that $2-\frac{d-\alpha-1}{\alpha}\ge 0$. By the assumption that $b\le \frac{d-\alpha}{2}$, we have $$ d-\alpha-3-\frac{d-\alpha-1}{\alpha} \le \left(2-\frac{d-\alpha-1}{\alpha}\right)\frac{d-\alpha}{2}. $$
Considering the above inequality as a quadratic inequality with respect to  $d$, we deduce
$$d\le \alpha+\sqrt{6\alpha+\frac{1}{4}}+\frac{3}{2},$$
which is a contradiction.

(ii) Suppose that either condition (2) or (3) holds. Inserting \eqref{M1}, \eqref{M2}, \eqref{M3} and $M_4\ge 0$ into the inequality \eqref{leb} leads to 
\begin{align*}
    \left(\frac{1}{\alpha}+\frac{1}{d-\alpha-1} +\frac{1}{b-1}\right)X^2&-\left(\frac{2b(\beta-1)}{\alpha}+2b-1\right)X\\& +\frac{b^2(\beta-1)^2}{\alpha}-(\beta-1)b+(b^2-b)(d-\alpha-\beta)\le0.
\end{align*}
Using $\frac{2b(\beta-1)}{\alpha}+2b-1<\frac{2b(\beta-1)}{\alpha}+2b$, we derive from the non-negativity of the discriminant of the above quadratic polynomial that
$$b\left(\frac{\beta-1}{\alpha}+1\right)^2\ge \left[\frac{1}{\alpha}+\frac{1}{d-\alpha-1} +\frac{1}{b-1}\right]\left[\frac{b(\beta-1)^2}{\alpha}-(\beta-1)+(b-1)(d-\alpha-\beta)\right].$$
That is,
\begin{align*}&\frac{2b(\beta-1)}{\alpha}+2b\\\ge &1+\frac{bd-b\beta-d+1}{\alpha}+\left[\frac{1}{d-\alpha-1} +\frac{1}{b-1}\right]\left[\frac{b(\beta-1)^2}{\alpha}-(\beta-1)+(b-1)(d-\alpha-\beta)\right].
\end{align*}
Since$$\frac{b(\beta-1)^2}{\alpha}-(\beta-1)+(b-1)(d-\alpha-\beta)=(b-1)(d-\alpha-1)+b(\beta-1)\left(\frac{\beta-1}{\alpha}-1\right),$$
we derive 
\begin{align*}  
\notag&\frac{2b(\beta-1)}{\alpha}+b\\\ge& \frac{bd-b\beta-d+1}{\alpha}+d-\alpha-1+\left(\frac{1}{d-\alpha-1} +\frac{1}{b-1}\right)b(\beta-1)\left(\frac{\beta-1}{\alpha}-1\right)\\ \notag >&\frac{bd-b\beta-d+1}{\alpha}+d-\alpha-1+\frac{b(\beta-1)}{d-\alpha-1}\left(\frac{\beta-1}{\alpha}-1\right)+ (\beta-1)\left(\frac{\beta-1}{\alpha}-1\right).\end{align*}
That is,
\begin{align}\notag
&\frac{-d+1}{\alpha}+d-\alpha-1+(\beta-1)\left(\frac{\beta-1}{\alpha}-1\right) \\ \label{b} < &\left[ \frac{2(\beta-1)}{\alpha}+1-\frac{d-\beta}{\alpha}-\frac{\beta-1}{d-\alpha-1}\left(\frac{\beta-1}{\alpha}-1\right)\right]b.
\end{align}
Since $2\beta-\alpha\le d \le 2\beta\le \alpha+\frac{5}{2}\beta-\frac{3}{2}$, we have\begin{align*}
&\frac{2(\beta-1)}{\alpha}+1-\frac{d-\beta}{\alpha}-\frac{(\beta-1)}{d-\alpha-1}\left(\frac{\beta-1}{\alpha}-1\right)\\\ge& \frac{2(\beta-1)}{\alpha} -\frac{3(\beta-1)}{2\alpha}-\frac{\beta-1}{2(\beta-\alpha-1)}\left(\frac{\beta-1}{\alpha}-1\right)=0.\end{align*}
Therefore, by the assumption that $b\le \frac{d-\alpha}{2}$, the inequality \eqref{b} yields
\begin{align*}\notag    
&\frac{-d+1}{\alpha}+d-\alpha-1+(\beta-1)\left(\frac{\beta-1}{\alpha}-1\right) \\< &\left[\frac{2(\beta-1)}{\alpha}+1-\frac{d-\beta}{\alpha}-\frac{(\beta-1)}{d-\alpha-1}\left(\frac{\beta-1}{\alpha}-1\right)\right]\frac{d-\alpha}{2} \\ \notag <& \left[\frac{2(\beta-1)}{\alpha}+1-\frac{d-\beta}{\alpha}\right]\frac{d-\alpha}{2} -\frac{(\beta-1)}{2}\left(\frac{\beta-1}{\alpha}-1\right).\end{align*}
That is, \begin{align} \label{d}
    d^2-3\beta d-\alpha^2-\alpha+3\beta^2-6\beta+5<0.
\end{align}
Since the discriminant of the above quadratic polynomial is non-negative, we deduce that $$9\beta^2\ge 4(-\alpha^2-\alpha+3\beta^2-6\beta+5).$$
Thus, we arrive at $$\beta\le \sqrt{\frac{1}{3}(2\alpha+1)^2+9}+4<\frac{2\sqrt{3}}{3}\alpha+7.$$
By \eqref{d}, we have
$$\left| d- \frac{3}{2}\beta\right|< \frac{1}{2} \sqrt{4\alpha^2-3\beta^2+4\alpha+24\beta-20})< \frac{1}{2} \sqrt{4\alpha^2-3\beta^2+28\beta}).$$
Therefore, neither condition (2) nor (3) holds, which is a contradiction.

Now, we have completed Case 2 and established Theorem \ref{thm2}.\end{proof}




\begin{remark}\label{rmk4.2}
In addition to the cases in Theorem \ref{thm2} and \cite[Theorem 1.4]{CLZ24}, our method can deal with many other cases.
Here, we comment on some possible ways to extend our conclusion.
\begin{itemize}
    \item [(i)] The condition $n<3d-2\alpha$ is not always necessary to apply our method. In particular, one can employ \eqref{m2} instead of \eqref{M2}.
    \item[(ii)] Suppose that there are $e$ edges within $S:=\{v_1,\ldots,v_b\}$ and $f$ edges between $S$ and $N_x\backslash S$. Note that $\sum_{k=1}^{b}D_k=2e$ and $\sum_{k=b+1}^{d-\alpha-1}D_k=f$. The inequality \eqref{M4} can be improved as\begin{align}
 \notag  M_4=\sum_{k=1}^{d-\alpha-1} \binom{D_k}{2}\ge \frac{1}{2} \biggl( \frac{(2e)^2}{b}+\frac{f^2}{d-\alpha-1-b}-\sum_{k=1}^{d-\alpha-1} D_k\biggr),
\end{align}
and the inequality \eqref{leb} can be replaced by $$M_1+M_2+M_3+M_4+\binom{b}{2}=\alpha e+\left(\binom{b}{2}-e\right)\beta.$$
These modifications might improve our conclusion to some extent.
\end{itemize}
\end{remark}

To illustrate the above remark (i), we show that the Lin--Lu--Yau curvature of a particular strongly regular graph with $n\geq 3d-2\alpha$ achieves the upper bound $\frac{2+\alpha}{d}$.

\begin{example}\label{ex:324}
    Consider a strongly regular graph $G$ with parameters (324, 152, 70, 72) derived from regular symmetric Hadamard matrices with
 constant diagonal (RSHCD), which is constructed in \cite[Section 5.3.2]{CP17} based on \cite[Lemma 11]{HX10}. Note that the condition $n<3d-2\alpha$ does not hold. Inserting the parameters into \eqref{M1}, \eqref{m2}, \eqref{M3},  \eqref{M4}, and \eqref{leb}, we have$$\left(\frac{107}{5670}+\frac{1}{2(b-1)}\right)X^2-\frac{5462b}{2835}X+\frac{104791b^2}{2835}-40b\le 0,$$where $2\le b\le \frac{d-\alpha}{2}=41$. Considering the discriminant, we have$$\left(\frac{5462b}{2835}\right)^2\ge 4\left(\frac{107}{5670}+\frac{1}{b-1}\right)\left(\frac{104791b^2}{2835}-40b\right).$$
 It is direct to check that the inequality above does not hold for $2\le b\le 41$. Therefore, the Lin--Lu--Yau curvature of $G$ is \[\frac{2+\alpha}{d}=\frac{9}{19}.\]
\end{example}

Now, let us prove Theorem \ref{Bonini}. Recall that, in the setting of Theorem \ref{Bonini}, we have $(n,d,\alpha,\beta)=(4\gamma+1,2\gamma,\gamma-1,\gamma)$. By the condition $(1)$ of Theorem \ref{thm2}, we only need to verify the cases that $\gamma$ is equal to $2$, $3$, $4$, $5$, or $6$.

\begin{proof}[Proof of Theorem \ref{Bonini}]
We use the framework of the proof of Theorem \ref{thm2}. It is sufficient to derive contradiction assuming $2\le b \le \frac{d-\alpha}{2}=\frac{\gamma+1}{2}$, which implies $\gamma\ge3$. By \eqref{4b}, we deduce directly that $b> \gamma-3$.
   
   Suppose that $\gamma=3$, then $b=2$.  That is, $\{ v_1, v_2 \}$ have only $1$ neighbor (denoted by $u_1$) in $N_y$. We can assume both $v_1$ and $v_2$ are adjacent to $u_1$. Since, otherwise, one of them has no neighbors in $N_y$. This reduces to the Case $b=1$, which leads to a contradiction as in the proof of Theorem \ref{thm2}. 
   Then, by Lemma \ref{lemma:neighbors}, both $v_1$ and $v_2$ have $2$ neighbors in $P_{xy}$. Since $P_{xy}$ only has $3$ elements, $v_1$ and $v_2$ have at least $1$ common neighbor in $P_{xy}$. Since $x$ and $u_1$ are also common neighbors of $v_1$ and $v_2$, it follows that $v_1$ and $v_2$ have at least $3$ common neighbors. Thus, $v_1$ is non-adjacent to $v_2$. Since both $v_1$ and $v_2$ have a neighbor in $N_x$, they are both adjacent to $v_3:=N_x\setminus\{v_1, v_2\}$. Hence, $v_1$ and $v_2$ have $4$ common neighbors, which contradicts to $\beta=\gamma=3$.

Suppose that $\gamma=4$, then $b=2$. That is, $\{ v_1, v_2 \}$ have only $1$ neighbor in $N_y$. Clearly, $X=2$. By \eqref{M1}, \eqref{M2} and \eqref{M3}, we have $M_1\ge1$, $M_2\ge2$ and $M_3\ge1$. Hence,
$$M_1+M_2+M_3+M_4+{\binom{b}{2}}>{\binom{b}{2}}\gamma,$$
which is a contradiction.

   Suppose that $\gamma=5$, then $b=3$. Substituting $\gamma=5$ and $b=3$ into the inequality \eqref{dxs2}, we have$$30-12X+\frac{23X^2}{20}\le0.$$
It follows that $X=5$ or $X=6$. 

In case that $X=5$, we derive from \eqref{M1}, \eqref{M2} and \eqref{M3} that $M_1\ge3$, $M_2\ge5$ and $M_3\ge4$ respectively. We claim that $M_4\ge1$. Set $N_x=\{v_1,\ldots,v_5\}$. Denote by $e$ the number of edges within $\{v_1,\ldots,v_5\}$ and $f$ edges between $\{v_1,\ldots,v_5\}$ and $N_x\backslash \{v_1,\ldots,v_5\}$.
Observe that $X=2e+f$ by Lemma \ref{lemma:neighbors} (ii). Thus, the number of edges within $\{v_1, v_2, v_3\}$ is at most $2$. If there are 2 edges within $\{v_1, v_2, v_3\}$, then $M_4\ge1$, as claimed. If the number of edges within $\{v_1, v_2, v_3\}$ is at most $1$, then there are at least $3$ edges between $\{v_1, v_2, v_3\}$ and $\{v_4, v_5\}$, which implies that either $v_4$ or $v_5$ is adjacent to two vertices of $\{v_1, v_2, v_3\}$. That is, we have $M_4\ge1$ as claimed.

In case that $X=6$, we derive from \eqref{M1}, \eqref{M2}, \eqref{M3} and \eqref{M4} that $M_1\ge2$, $M_2\ge4$, $M_3\ge6$ and $M_4\ge1$ respectively. 

Therefore, in either of the two cases $X=5$ and $x=6$, we have
$$M_1+M_2+M_3+M_4+{\binom{b}{2}}>{\binom{b}{2}}\gamma,$$
which is a contradiction.

Suppose that $\gamma=6$, then $b\le\frac{7}{2}$, which is contradictory to $b> \gamma-3$. 
\end{proof}

 To conclude this section, we prove the number-theoretic Corollary \ref{cor:number_theoretic}.   
\begin{proof}[Proof of Corollary \ref{cor:number_theoretic}]
Consider the corresponding Paley graph $P(n)$. By Theorem \ref{thm:upperk} and Theorem \ref{Bonini}, there is a perfect matching $\mathcal{M}$ between $N_x$ and $N_y$. Since $|S|\ge\frac{3}{4}(n-1)$, we have $\left|\left(GF(n)\setminus\{x,y\}\right)\setminus S\right|\leq \frac{1}{4}(n-1)-1$.
Recall that a Paley graph $P(n)$ with $n=4\gamma+1$ is a strongly regular graph with parameters $(4\gamma+1, 2\gamma,\gamma-1, \gamma)$. Therefore, we have $|N_x|=\gamma=\frac{1}{4}(n-1)$. That is, we have $\left|\left(GF(n)\setminus\{x,y\}\right)\setminus S\right|\leq |N_x|-1$.
Then, it is clear that there exists an edge $wz\in \mathcal{M}$, which is not adjacent to any vertex in $GF(n)\setminus S$, and the desired result follows.
\end{proof}

\section*{Acknowledgement}
This work is supported by the National Key R \& D Program of China 2023YFA1010200 and the National Natural Science Foundation of China No. 12031017 and No. 12431004. H. Z. is very grateful to Professor Wen Huang for his encouragement and helpful discussion.

\end{document}